\theoremstyle{plain}
\newtheorem{theo}{Theorem}
\newtheorem{lem}[theo]{Lemma}
\newtheorem{defi}{Definition}
\newtheorem{remark}{Remark}
\newtheorem{example}{Example}
\newfont{\bbold}{bbold10 scaled\magstep1}
\newcommand{\D}{\displaystyle}
\newcommand{\real}{\mathbb{R}}
\newcommand{\mlt}{\circ}
\newcommand{\mltl}{\star}
\providecommand{\E}{\operatorname{E}}
\providecommand{\N}{\mathbb{N}}
\newcommand{\bO}{\ensuremath{\mathcal{O}}}
\newcommand{\dtree}[1]{\pstree{#1}}
\newcommand{\tnr}[2][dummy]{\TC*[name=#1]~[tnpos=r]{\ensuremath{\scriptstyle #2}}}
\newcommand{\tnl}[2][dummy]{\TC*[name=#1]~[tnpos=l]{\ensuremath{\scriptstyle #2}}}
\begin{document}

\begin{frontmatter}


\author[Darmstadt]{Kristian Debrabant}
\ead{debrabant@mathematik.tu-darmstadt.de}
\author[Trondheim]{Anne Kv{\ae}rn{\o}}
\ead{Anne.Kvarno@math.ntnu.no}
\address[Darmstadt]{Technische Universit\"{a}t Darmstadt, Fachbereich Mathematik, Dolivostra{\ss}e 15,
64293 Darmstadt, Germany}
\address[Trondheim]{Department of Mathematical Sciences, Norwegian
University of Science and Technology, 7491 Trondheim, Norway}
\title{Composition of stochastic B--series with applications to implicit Taylor methods}

\begin{abstract}
In this article, we construct a representation formula for stochastic B--series evaluated in a B--series. This formula is used to give for the first time the order conditions of implicit Taylor methods in terms of rooted trees. Finally, as an example we apply these order conditions to derive in a simple manner a family of strong order 1.5 Taylor methods applicable to It\^{o} SDEs.
\end{abstract}

\begin{keyword}
Stochastic Taylor method \sep stochastic differential equation \sep order conditions\sep weak approximation, strong approximation \sep stochastic B--series


\MSC 65C30 \sep 60H35 \sep 65C20 \sep 68U20
\end{keyword}

\end{frontmatter}

\section{Introduction}
Taylor methods have for a long time been a common choice for solving stochastic differential equations (SDEs), and with increased use of automatic differentiation techniques, their popularity will hardly subside. The Taylor expansions of the exact solutions of SDEs, from which the Taylor methods are derived, can take one out of two forms: either as
Wagner--Platen series \cite{kloeden91sai,kloeden99nso} or as B--series
\cite{debrabant08bao,burrage96hso, burrage98goc, komori97rta, roessler06rta}, the relation between the two series has been demonstrated in \cite{debrabant10ste}. In the present paper, we focus on B--series.  In short, the exact solution $X(t)$ evaluated after one step starting at $(t_0,x_0)$ can be expressed in terms of a B--series:
\[ X(t_0+h) = B(\varphi,x_0;h) = \sum_{\tau\in T} \alpha(\tau) \cdot \varphi(\tau)(h) \cdot F(\tau)(x_0), \]
where $\alpha$ is a combinatorial term, $\varphi$ is a stochastic integral, and $F$ is some differential. These series will be thoroughly explained in Subsection \ref{subsec:bseries}. Implicit Taylor methods have been derived e.\,g.\ in \cite{kloeden99nso, tian01itm}. Written in terms of B--series, they take the form
\begin{equation}
\label{eq:STM}
Y_1 = B(\Phi_{ex},x_0;h)+B(\Phi_{im},Y_1;h).
\end{equation}
To illustrate this, consider the semi-implicit Milstein scheme applied to the It\^{o} SDE
\[ dX = f(X)dt + g(X)dW \]
which is given by
\[ Y_1 = x_0+hf(Y_1)+I_{(1)}g(x_0) + I_{(1,1)}(g'g)(x_0). \]
The Milstein method takes the form \eqref{eq:STM} with
\begin{align*}
B(\Phi_{ex},x_0;h) &= x_0+I_{(1)}g(x_0) + I_{(1,1)}(g'g)(x_0) \\
\intertext{ and } B(\Phi_{im},Y_1;h) &= hf(Y_1).
\end{align*}
To compare with the exact solution, the numerical solution $Y_1$ of \eqref{eq:STM} has to be expressed in terms of B--series as well.
Here, this is done in two steps. In Subsection \ref{subsec:comp} we prove a quite general result, well known from deterministic theory: A B--series evaluated in a B--series is again a B--series, written as
\begin{equation}
\label{eq:comp1}
B(\phi_y,B(\phi_x,x_0;h);h) = B(\phi_x\mlt\phi_y,x_0;h).
\end{equation}
This result is not only useful for implicit Taylor methods, but also for composition- and splitting methods. The rule can be used with both  Stratonovich and It\^{o} calculus. In Section \ref{sec:taylor}, we present the B--series for the implicit Taylor methods, and thereby their order conditions. Finally, in Section \ref{seq:num}, a family of implicit Taylor methods  is developed and the results are confirmed by numerical experiments. 
\section{Some notation, definitions and preliminary results}\label{sec:prelim}
In this section we introduce some necessary notation and provide a
few definitions and preliminary results that will be used later.
Let $(\Omega,\mathcal{A},\mathcal{P})$ be a probability space. We
denote by $(X(t))_{t \in I}$ the stochastic process which is the
solution of a $d$-dimensional SDE defined by
\begin{equation}\label{eq:SDEdiff}
dX(t)=g_0(X(t))dt+\sum_{l=1}^{m}g_{l}(X(t))\mltl dW_l(t),\quad X(t_0)=x_{0},
\end{equation}
with an $m$-dimensional Wiener process $(W(t))_{t \geq 0}$ and
$I=[t_0,T]$. As usual, \eqref{eq:SDEdiff} is construed as abbreviation of
\begin{equation}\label{SDE}
X(t)=x_{0}+\int_{t_{0}}^{t}g_0(X(s))ds+\sum_{l=1}^{m}\int_{t_{0}}^{t}g_{l}(X(s))\mltl
dW_l(s).
\end{equation}
The integral w.\,r.\,t.\ the Wiener process has to be
interpreted e.\,g.\ as It\^{o} integral with $\mltl dW_l(s)=dW_l(s)$ or
as Stratonovich integral with $\mltl dW_l(s)=\circ dW_l(s)$. We assume that the Borel-measurable coefficients $g_l :
\mathbb{R}^d \rightarrow \mathbb{R}^{d}$ are sufficiently differentiable and that the SDE \eqref{SDE} has a unique solution.

To simplify the presentation, we define $W_0(s)=s$, so that
\eqref{SDE} can be written as
\begin{equation}\label{SDE_new}
X(t)=x_{0}+\sum_{l=0}^{m}\int_{t_{0}}^{t}g_{l}(X(s))\mltl
dW_l(s).
\end{equation}
In the following we denote by $\Xi$ a set of families of measurable mappings,
\[
\Xi:=\big\{\{\varphi(h)\}_{h\geq0}:\;
    \varphi(h):~\Omega\to\real\text{ is }\mathcal{A}-\mathcal{B}\text{-measurable }\forall h\geq0
    \big\}.
\]

\subsection{Stochastic B--series} \label{subsec:bseries}
B--series for deterministic ODEs were introduced by Butcher \cite{butcher63cft}. Today such series appear as a fundamental tool to
do local error analysis on a wide range of problems. B--series for SDEs and their numerical solution by stochastic Runge-Kutta methods have been developed by Burrage and Burrage
\cite{burrage96hso,burrage00oco} to study strong convergence in the
Stratonovich case, by Komori, Mitsui and Sugiura \cite{komori97rta} and Komori \cite{komori07mrt} to study weak convergence in the Stratonovich case and by R\"{o}{\ss}ler
\cite{roessler04ste,roessler06rta} to study weak convergence in both the It\^{o} and the Stratonovich case. However, the distinction between the It\^{o} and the Stratonovich integrals
only depends on the definition of the integrals, not on how the
B--series are constructed. Similarly, the distinction between weak and strong convergence only depends on the definition of the local error. Thus, we presented in \cite{debrabant08bao} a uniform and
self-contained theory for the construction of stochastic B--series for the exact solution of SDEs and its numerical approximation by stochastic Runge-Kutta methods. In this subsection, we review results given there for the solution of \eqref{SDE_new}. In Subsection \ref{subsec:comp} we prove the composition rule \eqref{eq:comp1}.

Following the idea of Burrage and Burrage we introduce the set of
colored, rooted trees related to the SDE \eqref{SDE}, as well as the
elementary differentials associated with each of these trees.

\begin{defi}[Trees] \thlabel{def:trees}
    The set of $m+1$-colored, rooted trees \[T=\{\emptyset\}\cup T_0 \cup T_1 \cup
    \dots \cup T_m\] is recursively defined as follows:
    \begin{description}
    \item[a)] The graph $\bullet_l=[\emptyset]_l$ with only one vertex
        of color $l$ belongs to $T_l$.
    \end{description}
    Let $\tau=[\tau_1,\tau_2,\dotsc,\tau_{\kappa}]_l$ be the tree
    formed by joining the subtrees
    $\tau_1,\tau_2,\dotsc,\tau_{\kappa}$ each by a single branch to a
    common root of color $l$.
    \begin{description}
    \item[b)] If $\tau_1,\tau_2,\dotsc,\tau_{\kappa} \in T$ then
        $\tau=[\tau_1,\tau_2,\dotsc,\tau_{\kappa}]_l \in T_l$.
    \end{description}
\end{defi}
Thus, $T_l$ is the set of trees with an $l$-colored root, and $T$ is
the union of these sets. As usual, the trees are not ordered, i.\,e.\ the order of subtrees is not significant.

\begin{defi}[Elementary differentials] \thlabel{def:el_diff}
    For a tree $\tau \in T$ the elementary differential is
    a mapping $F(\tau):\real^d \rightarrow \real^d$ defined
    recursively by
    \begin{description}
    \item[a)] $F(\emptyset)(x_0)=x_0$, \\ \mbox{}
    \item[b)] $F(\bullet_l)(x_0)=g_l(x_0)$, \\ \mbox{}
    \item[c)] If $\tau=[\tau_1,\tau_2,\dotsc,\tau_{\kappa}]_l \in T_l$
        then
        \[
        F(\tau)(x_0)=g_l^{(\kappa)}(x_0)
        \big(F(\tau_1)(x_0),F(\tau_2)(x_0),\dotsc,F(\tau_{\kappa})(x_0)\big). \]
    \end{description}
\end{defi}

As will be shown in the following, both the exact and the numerical solutions  can formally be written in terms of
B--series.
\begin{defi}[B--series] \thlabel{def:B--series}
 A (stochastic) B--series is
    a formal series of the form
    \[ B(\phi,x_0; h) = \sum_{\tau \in T}
    \alpha(\tau)\cdot\phi(\tau)(h)\cdot F(\tau)(x_0),\] where
    $\phi:T \rightarrow \Xi$ and
    $\alpha: T\rightarrow \mathbb{Q}$ is given by
    \begin{align*}
        \alpha(\emptyset)&=1,&\alpha(\bullet_l)&=1,
        &\alpha(\tau=[\tau_1,\dotsc,\tau_{\kappa}]_l)&=\frac{1}{r_1!r_2!\cdots
            r_{q}! } \prod_{j=1}^{\kappa} \alpha(\tau_j),
    \end{align*}
    where $r_1,r_2,\dots,r_{q}$ count equal trees among
    $\tau_1,\tau_2,\dots,\tau_{\kappa}$.
\end{defi}

Note that $\alpha(\tau)$ is the inverse of the order of the automorphism group of $\tau$. To simplify the presentation, in the following we assume that all elementary
differentials exist and all considered B--series converge. Otherwise, one has
to consider truncated B--series and discuss the remainder term.

The next lemma proves that if $Y(h)$ can be written as a B--series,
then $f(Y(h))$ can be written as a similar series, where the sum is
taken over trees with a root of color $f$ and subtrees in $T$.  The
lemma is fundamental for deriving B--series for the exact and the
numerical solution. It can also be used for deriving weak convergence
results.
\begin{lem} \thlabel{lem:f_y} If $Y(h)=B(\phi, x_0; h)$ with $\phi(\emptyset) \equiv 1$ is some
    B--series and $f\in C^{\infty}(\real^d,\real^{\hat{d}})$ then
    $f(Y(h))$ can be written as a formal series of the form
    \begin{equation}
        \label{eq:f}
        f(Y(h))=\sum_{u\in U_f} \beta(u)\cdot \psi_\phi(u)(h)\cdot G(u)(x_0),
    \end{equation}
    where $U_f$ is a set of trees derived from $T$ by
    \begin{description}
    \item[a)] $[\emptyset]_f \in U_f$, and if
        $\tau_1,\tau_2,\dotsc,\tau_{\kappa}\in T$ then
        $[\tau_1,\tau_2,\dotsc,\tau_{\kappa}]_f\in U_f$, \\ \mbox{}
    \item[b)] $G([\emptyset]_f)(x_0)=f(x_0)$ and $G(u=[\tau_1,\dotsc,\tau_{\kappa}]_f)(x_0) =
        f^{(\kappa)}(x_0)
        \big(F(\tau_1)(x_0),\dotsc,F(\tau_{\kappa})(x_0)\big)$, \\
        \mbox{}
    \item[c)] $\beta([\emptyset]_f)=1$ and $\D
        \beta(u=[\tau_1,\dotsc,\tau_{\kappa}]_f)
        =\frac{1}{r_1!r_2!\cdots r_{q}!}\prod_{j=1}^{\kappa}
        \alpha(\tau_{{j}})$, \\where $r_1,r_2,\dots,r_{q}$ count
        equal trees among $\tau_1,\tau_2,\dotsc,\tau_{\kappa}$, \\
        \mbox{}
    \item[d)] $\psi_\phi([\emptyset]_f)\equiv1$ and
        $\psi_\phi(u=[\tau_1,\dotsc,\tau_{\kappa}]_f)(h) =
        \prod_{j=1}^{\kappa} \phi(\tau_j)(h)$.
    \end{description}
\end{lem}
\begin{proof}
    Writing $Y(h)$ as a B--series, we have
    \begin{align*}
        f(Y(h)) &= f\left(\sum_{\tau\in T}\alpha(\tau) \cdot
          \phi(\tau)(h)
          \cdot F(\tau)(x_{0})\right) \\
        &= \sum_{\kappa=0}^{\infty}
        \frac{1}{\kappa!}f^{(\kappa)}(x_{0}) \left(\sum_{\tau\in
              T\setminus\{\emptyset\}}\alpha(\tau) \cdot \phi(\tau)(h)
          \cdot F(\tau)(x_{0}) \right)^{\kappa}  \\
        & = f(x_{0}) + \sum_{\kappa=1}^{\infty}\frac{1}{\kappa!}
        \sum_{\{\tau_{1},\tau_{2},\dotsc \tau_{\kappa}\}\in
            T\backslash \{\emptyset\}}
        \frac{\kappa!}{r_{1}!r_{2}!\cdots r_{q}!}
         \cdot  \left(\prod_{j=1}^{\kappa}
          \alpha(\tau_{j}) \cdot \phi(\tau_j)(h)\right)
        f^{(\kappa)}(x_{0})\big(F(\tau_{1})(x_{0}),\dotsc,
        F(\tau_{\kappa})(x_{0}) \big)
    \end{align*}
    where the last sum is taken over all possible unordered combinations of
    $\kappa$ trees in $T$. For each set of trees
    $\tau_{1},\tau_{2},\dotsc,\tau_{\kappa} \in T$ we assign a
    $u=[\tau_{1},\tau_{2},\dotsc,\tau_{\kappa}]_{f} \in U_{f}$. The
    theorem is now proved by comparing term by term with \eqref{eq:f}.
\end{proof}
When Lemma \ref{lem:f_y} is applied to the functions $g_{l}$ on the
right hand side of \eqref{SDE_new} we get the following result: If
$Y(h)=B(\phi, x_0; h)$ then
\begin{equation}
    \label{eq:g-series}
    g_l(Y(h)) = \sum_{\tau\in T_l}\alpha(\tau) \cdot \phi'_l(\tau)(h)\cdot
    F(\tau)(x_0)
\end{equation}
in which
\[ \phi'_l(\tau)(h) =
\begin{cases}
    1 & \text{if } \tau=\bullet_{l}, \\
    \D \prod_{j=1}^{\kappa} \phi(\tau_j)(h) & \text{if} \;
    \tau=[\tau_1,\dotsc,\tau_{\kappa}]_l \in T_l.
\end{cases} \]

\begin{theo} \thlabel{thm:Bex} The solution $X(t_0+h)$ of \eqref{SDE_new}
    can be written as a B--series $B(\varphi,x_0; h)$ with
    \begin{align*}
        \varphi(\emptyset)&\equiv1,&\varphi(\bullet_l)(h)&=W_l(h), &
        \varphi(\tau=[\tau_1,\dotsc,\tau_{\kappa}]_l)(h)&=\int_0^h
        \prod_{j=1}^{\kappa} \varphi(\tau_j)(s)\mltl dW_l(s).
    \end{align*}
\end{theo}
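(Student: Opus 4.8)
The plan is to insert the B--series ansatz $X(t_0+\sigma)=B(\varphi,x_0;\sigma)$ into the integral form \eqref{SDE_new} of the SDE and to determine the coefficient family $\varphi$ by comparing the two sides term by term. From the outset I would impose $\varphi(\emptyset)\equiv1$, which simultaneously encodes the initial condition $X(t_0)=x_0=F(\emptyset)(x_0)$ and supplies exactly the hypothesis needed to invoke \thref{lem:f_y}; the remaining coefficients will then be forced by the equation rather than guessed.

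First I would apply the consequence \eqref{eq:g-series} of \thref{lem:f_y} to each coefficient $g_l$, which gives
\[ g_l(X(t_0+\sigma)) = \sum_{\tau\in T_l}\alpha(\tau)\cdot\varphi'_l(\tau)(\sigma)\cdot F(\tau)(x_0), \qquad l=0,1,\dotsc,m, \]
with $\varphi'_l$ as defined right after \eqref{eq:g-series}. Substituting into \eqref{SDE_new}, writing the integration variable relative to $t_0$, and (formally, under the standing convergence assumption) interchanging the stochastic integral with the tree summation yields
\[ X(t_0+h) = x_0 + \sum_{l=0}^{m}\sum_{\tau\in T_l}\alpha(\tau)\left(\int_0^h \varphi'_l(\tau)(\sigma)\,\mltl dW_l(\sigma)\right)F(\tau)(x_0). \]

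Next I would match this against the ansatz $X(t_0+h)=\sum_{\tau\in T}\alpha(\tau)\varphi(\tau)(h)F(\tau)(x_0)$. Since every nonempty tree carries a unique root color, $T\setminus\{\emptyset\}=\bigcup_{l=0}^m T_l$ is a disjoint union, so comparing the coefficient of each $\alpha(\tau)F(\tau)(x_0)$ gives $\varphi(\emptyset)(h)=1$ together with $\varphi(\tau)(h)=\int_0^h\varphi'_l(\tau)(\sigma)\,\mltl dW_l(\sigma)$ whenever $\tau\in T_l$. Unpacking $\varphi'_l$ in its two cases then reproduces precisely $\varphi(\bullet_l)(h)=\int_0^h 1\,\mltl dW_l(\sigma)=W_l(h)$ (using $W_l(0)=0$, and $W_0(h)=h$) and $\varphi([\tau_1,\dotsc,\tau_\kappa]_l)(h)=\int_0^h\prod_{j=1}^\kappa\varphi(\tau_j)(\sigma)\,\mltl dW_l(\sigma)$, which is the claimed formula.

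The delicate point is the legitimacy of the term-by-term comparison, and I would make it rigorous by induction on the number of nodes of $\tau$: the recursion defines $\varphi(\tau)$ solely through the $\varphi(\tau_j)$ of its strictly smaller subtrees, with base cases $\varphi(\emptyset)\equiv1$ and $\varphi(\bullet_l)=W_l$, so $\varphi$ is well defined and uniquely determined at every order; existence and uniqueness of the solution of \eqref{SDE_new} then identify this B--series with $X(t_0+h)$. The one genuine obstacle is analytic rather than combinatorial, namely justifying the interchange of the infinite tree sum with the stochastic integral together with the Taylor expansion underlying \eqref{eq:g-series}. I would handle this exactly as the paper prescribes, by passing to truncated B--series and controlling the remainder term, invoking the standing assumption that all elementary differentials exist and the series converge.
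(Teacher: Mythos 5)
Your proposal follows essentially the same route as the paper's proof: insert the ansatz $X(t_0+h)=B(\varphi,x_0;h)$ with $\varphi(\emptyset)\equiv1$ into \eqref{SDE_new}, apply \eqref{eq:g-series} to each $g_l$, compare term by term, and close the argument by induction on the trees. Your additional remarks on the disjointness of the $T_l$ and on the formal interchange of summation and integration are consistent with the paper's standing convergence assumptions, so the argument is correct.
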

\begin{proof}
Write the exact solution as some B--series
$X(t_0+h)=B(\varphi,x_0;h)$ with $\varphi(\emptyset)\equiv1$. By \eqref{eq:g-series} the SDE \eqref{SDE_new} can be written as
\[ \sum_{\tau\in T} \alpha(\tau) \cdot \varphi(\tau)(h) \cdot
F(\tau)(x_0) = x_0 + \sum_{l=0}^m \sum_{\tau \in T_l} \alpha(\tau) \cdot \int_0^h
\varphi_l'(\tau)(s) \mltl dW_l(s) \cdot F(\tau)(x_0). \]
Comparing term by term we get
\[ \varphi(\tau)(h) =
\int_0^h \varphi'(\tau)(s)\mltl dW_l(s) \quad \text{for} \quad\tau\in
T_l, \quad l=0,1,\dotsc,m. \]
The proof is completed by induction on $\tau$.
\end{proof}

The same result is given for the Stratonovich case in
\cite{burrage00oco,komori07mrt}, but it clearly also applies to the
It\^{o} case.

The definition of the
order of the tree, $\rho(\tau)$, is motivated by the fact that $\E
W_{l}(h)^{2}=h$ for $l\geq 1 $ and $W_{0}(h)=h$:
\begin{defi}[Order]
    The order of a tree $\tau \in T$ respectively  $u\in U_f$ is defined by
    \[\rho(\emptyset)=0,\quad\rho(u=[\tau_1,\dots,\tau_\kappa]_f)=\sum\limits_{i=1}^\kappa\rho(\tau_i),\]
    and
    \[\rho(\tau=[\tau_1,\dots,\tau_\kappa]_l)=\sum\limits_{i=1}^\kappa\rho(\tau_i)+
    \begin{cases}
        1&\text{for }l=0,\\
        \frac12&\text{otherwise}.
    \end{cases}
    \]
\end{defi}
We conclude this subsection with an example.
\begin{example}
For $\tau  =
\raisebox{-3ex}{\dtree{\tnr{0}}{\tnl{1} \dtree{\tnr{1}}{\tnl{2} \tnr{2}}}}
$ we get
\[ \rho(\tau)= 3,\qquad \alpha(\tau)= \frac{1}{2}, \qquad F(\tau)(x_0)= g_0''(g_1,g_1''(g_2,g_2))(x_0) \] and
\begin{align*}
  \varphi(\tau)(h) & =
  \int_{0}^h W_1(s_1)\left(\int_{0}^{s_1}W_2(s_2)^2\mltl
    dW_1(s_2)\right) ds_1  \\ & =
 \begin{cases}
    4J_{(2,2,1,1,0)}+2J_{(2,1,2,1,0)}+2J_{(1,2,2,1,0)}
     & (\text{Stratonovich}), \\[2mm]
    4I_{(2,2,1,1,0)}+2I_{(2,1,2,1,0)}+2I_{(1,2,2,1,0)} & \\ +2I_{(0,1,1,0)}+2I_{(2,2,0,0)}+I_{(1,0,1,0)}+I_{(0,0,0)}
    & (\text{It\^{o}}).
 \end{cases}
\end{align*}

\end{example}

\subsection{Composition of B--series} \label{subsec:comp}
In this section, we will show that, similar to the deterministic case, B--series evaluated in a B--series can be written as a B--series, or more specific:
\begin{equation} \label{eq:comp}
   B(\phi_y,B(\phi_x,x_0;h);h) = B(\phi_x \mlt \phi_y,x_0; h),
\end{equation}
and we will define the composition operator $\phi_x \mlt \phi_y$.

For this purpose, we will need the decomposition of a tree $\tau\in T$ into a subtree $\vartheta$ sharing the root with $\tau$, and a remainder multiset $\omega$ of trees left over when $\vartheta$ is removed from $\tau$. This forms a triple $(\tau,\vartheta,\omega)$. We also include the empty tree as a possible subtree, in which case the triple becomes $(\tau,\emptyset,\{\tau\})$.
\begin{example} \thlabel{ex:triples}
Two examples of such triples are
\[
\left(\dtree{\tnr{j}}{\tnl{j} \tnr{j} \dtree{\tnr{j}}{\tnl{j} \tnr{j}}},
       \dtree{\tnr{j}}{\tnl{j} \tnr{j}},
       \{\tnr{j},\tnr{j},\tnr{j}\}
\right),\]
corresponding to
\[
\tau=\dtree{\tnr[A]{j}}{\tnl[B]{j} \tnr[C]{j} \dtree{\tnr[D]{j}}{\tnl[E]{j} \tnr[F]{j}}}
\pspolygon[linearc=5pt,
        linestyle=dotted,dotsep=2pt,
        linewidth=0.5pt]%
  (!\psGetNodeCenter{B} B.x 0.4 sub B.y 0.1 add)%
  (!\psGetNodeCenter{B} B.x B.y 0.2 add)%
  (!\psGetNodeCenter{A} A.x A.y 0.1 add)%
  (!\psGetNodeCenter{D} D.x 0.1 sub D.y 0.1 add)%
  (!\psGetNodeCenter{D} D.x 0.6 add D.y 0.1 add)%
  (!\psGetNodeCenter{A} A.x A.y 0.4 sub)%
\rput(!\psGetNodeCenter{A} A.x 0.7 add A.y 0.1 sub){\rnode{G}{\hat{=}\vartheta}}
\pspolygon[linearc=5pt,
        linestyle=dashed,dash=3pt 2pt,
        linewidth=0.5pt]%
  (!\psGetNodeCenter{C} C.x 0.2 sub C.y 0.2 sub)%
  (!\psGetNodeCenter{E} E.x 0.2 sub E.y 0.2 add)%
  (!\psGetNodeCenter{F} F.x 0.4 add F.y 0.2 add)%
  (!\psGetNodeCenter{D}\psGetNodeCenter{F} F.x 0.2 add D.y 0.2 add)%
  (!\psGetNodeCenter{D} D.x 0.2 sub D.y 0.2 add)%
  (!\psGetNodeCenter{C} C.x 0.2 add C.y 0.2 sub)
  \rput(!\psGetNodeCenter{F} F.x 0.7 add F.y 0.1 sub){\rnode{H}{\hat{=}\omega}}
  \qquad\text{or equivalently}\qquad
\tau=\dtree{\tnr[A]{j}}{\tnl[B]{j} \tnr[C]{j} \dtree{\tnr[D]{j}}{\tnl[E]{j} \tnr[F]{j}}}
\pspolygon[linearc=5pt,
        linestyle=dotted,dotsep=2pt,
        linewidth=0.5pt]%
  (!\psGetNodeCenter{C} C.x 0.2 sub C.y 0.2 add)%
  (!\psGetNodeCenter{D} D.x 0.1 sub D.y 0.1 add)%
  (!\psGetNodeCenter{D} D.x 0.6 add D.y 0.1 add)%
  (!\psGetNodeCenter{A} A.x A.y 0.4 sub)%
\rput(!\psGetNodeCenter{A} A.x 0.7 add A.y 0.1 sub){\rnode{G}{\hat{=}\vartheta}}
\pspolygon[linearc=5pt,
        linestyle=dashed,dash=3pt 2pt,
        linewidth=0.5pt]%
  (!\psGetNodeCenter{B} B.x 0.2 sub B.y 0.2 add)%
  (!\psGetNodeCenter{E} E.x 0.2 sub E.y 0.2 add)%
  (!\psGetNodeCenter{F} F.x 0.4 add F.y 0.2 add)%
  (!\psGetNodeCenter{D}\psGetNodeCenter{F} F.x 0.2 add D.y 0.2 add)%
  (!\psGetNodeCenter{C} C.x 0.2 sub C.y 0.2 add)%
  (!\psGetNodeCenter{B} B.x 0.1 add B.y 0.2 sub)%
  (!\psGetNodeCenter{B} B.x 0.4 sub B.y 0.2 sub)
  \rput(!\psGetNodeCenter{F} F.x 0.7 add F.y 0.1 sub){\rnode{H}{\hat{=}\omega}},
\]
and
\[
\left(\dtree{\tnr{j}}{\tnl{j} \tnr{j} \dtree{\tnr{j}}{\tnl{j} \tnr{j}}},
       \dtree{\tnr{j}}{\tnl{j} \tnr{j}},
       \left\{\dtree{\tnr{j}}{\tnl{j} \tnr{j}}\right\}
\right),
\]
corresponding to
\[
\tau=\dtree{\tnr[A]{j}}{\tnl[B]{j} \tnr[C]{j} \dtree{\tnr[D]{j}}{\tnl[E]{j} \tnr[F]{j}}}
\pspolygon[linearc=5pt,
        linestyle=dotted,dotsep=2pt,
        linewidth=0.5pt]%
  (!\psGetNodeCenter{B} B.x 0.7 sub B.y 0.2 add)%
  (!\psGetNodeCenter{C} C.x 0.3 add C.y 0.1 add)%
  (!\psGetNodeCenter{A} A.x 0.3 add A.y 0.2 sub)%
  (!\psGetNodeCenter{A} A.x A.y 0.2 sub)%
\rput(!\psGetNodeCenter{A} A.x 0.7 add A.y 0.1 sub){\rnode{G}{\hat{=}\vartheta}}
\pspolygon[linearc=5pt,
        linestyle=dashed,dash=3pt 2pt,
        linewidth=0.5pt]%
  (!\psGetNodeCenter{E} E.x 0.6 sub E.y 0.2 add)%
  (!\psGetNodeCenter{F} F.x 0.4 add F.y 0.1 add)%
  (!\psGetNodeCenter{D} D.x 0.2 add D.y 0.2 sub)%
  (!\psGetNodeCenter{D} D.x D.y 0.2 sub)%
  \rput(!\psGetNodeCenter{F} F.x 0.7 add F.y 0.1 sub){\rnode{H}{\hat{=}\omega}}.
\]
\end{example}
From this example we observe that the same pair of a tree $\tau$ and a subtree $\vartheta$ may have more than one remainder multiset. We then define
$ST(\tau)$ as the set of all possible subtrees of $\tau$ together with the corresponding  remainder multiset $\omega$, that is, for each $\tau\in T\backslash \emptyset$ we have
\begin{align*}
ST(\bullet_l) &= \{ (\emptyset,\{\bullet_l\}), (\bullet_l,\{\emptyset\}) \}, \\
ST(\tau =[\tau_1,\dotsc,\tau_{\kappa}]_l) &=
\{(\vartheta,\omega)\;:\; \vartheta=[\vartheta_1,\dotsc,\vartheta_{\kappa}]_l, \quad
   \omega=\{\omega_1,\dotsc,\omega_{\kappa}\}, \\ &\qquad
   (\vartheta_i,\omega_i) \in ST(\tau_i), \quad i=1,\dots,\kappa \} \cup
   (\emptyset,\{\tau\}).
\end{align*}
Further, for each $\vartheta\in T$ define
\[ A(\vartheta) = \{ (\tau,\omega)\;: \; (\vartheta,\omega)\in ST(\tau)\}. \]
At last, we will use that
\begin{equation} \label{eq:switchsum}
\left\{(\tau,\vartheta,\omega) \;:\; \vartheta\in T, (\tau,\omega)\in A(\vartheta)\right\}
= \left\{(\tau,\vartheta,\omega)\; :\; \tau\in T, (\vartheta,\omega)\in ST(\tau) \right\}.
\end{equation}
And with these preparations, we are ready to prove \eqref{eq:comp}.
From Definition \ref{def:B--series} we have
\begin{equation} \label{eq:bcom_tmp}
B(\phi_y,B(\phi_x,x_0;h);h) = \sum_{\vartheta\in T} \alpha(\vartheta) \cdot
\phi_y(\vartheta)(h) \cdot F(\vartheta)(B(\phi_x,x_0;h)).
\end{equation}
We are now looking for an expression for $F(\vartheta)$ of the form
\begin{equation} \label{eq:finB}
F(\vartheta)(B(\phi_x,x_0;h)) = \sum_{(\tau,\omega)\in A(\vartheta)} \gamma(\tau,\vartheta,\omega) \frac{\alpha(\tau)}{\alpha(\vartheta)}\cdot \bar{\phi}_x(\tau,\omega)(h)\cdot F(\tau)(x_0).
\end{equation}
In this case, we insert \eqref{eq:finB} into \eqref{eq:bcom_tmp} and apply \eqref{eq:switchsum} to obtain \eqref{eq:comp} with
\begin{equation} \label{eq:comp_tmp}
\left(\phi_x\mlt\phi_y \right)(\tau) = \sum_{(\vartheta,\omega)\in ST(\tau)} \gamma(\tau,\vartheta,\omega) \phi_y(\vartheta)\bar{\phi}_x(\tau,\omega).
\end{equation}
To find recursive formulas for the still unknown quantities $\gamma$ and $\bar{\phi}_x$ we start with the trivial case $\vartheta=\emptyset$.
We have $A(\emptyset) = \{(\tau,\{\tau\})\;
:\;\tau\in T\}$ and
\[ F(\emptyset)(B_x)=B_x \]
where, for simplicity, we use the abbreviation $B_x=B(\phi_x,x_0;h)$.
Thus we can choose $\gamma(\tau,\emptyset,\{\tau\})=1$ and $\bar{\phi}_x(\tau,\{\tau\})=\phi_x(\tau)$.
For $\vartheta=\bullet_l$ we have
$A(\bullet_l)=\{(\tau,\omega) : \tau=[\tau_1,\dotsc,\tau_{\kappa}]_l, \;
\omega = \{\tau_1,\dotsc,\tau_{\kappa}\} \; \}$.
By \thref{lem:f_y} and \thref{def:trees,def:el_diff,def:B--series} we get
\[
F(\bullet_l)(B_x) = g_l(B_x) =
\sum_{\tau\in T_l} \alpha(\tau)\cdot \prod_{i=1}^{\kappa} \phi_x(\tau_i)(h) \cdot F(\tau)(x_0),
\]
so we can choose $\gamma(\tau,\vartheta,\omega)=1$ and $\bar{\phi}_x(\tau,\omega)= \prod_{i=1}^{\kappa} \phi_x(\tau_i)$.
For the more general case, let $\vartheta=[\vartheta_1,\dotsc,\vartheta_{\kappa_{\vartheta}}]_l$, so that
\[ F(\vartheta)(B_x) = g_l^{(\kappa_{\vartheta})}(B_x)\left(
   F(\vartheta_1)(B_x),\dotsc,F(\vartheta_{\kappa_{\vartheta}})(B_x)\right).\]
Apply Lemma \ref{lem:f_y} on $g_l^{(\kappa_{\vartheta})}(B_x)$, use the induction hypothesis \eqref{eq:finB} on
each $F(\vartheta_i)(B_x)$ and the multilinearity of $g_l^{(\kappa_{\vartheta})}$ to obtain
\begin{equation}\label{eq:sumtheta}
\begin{split}
F(\vartheta)(B_x) =& \sum_{\omega_0\in U_{g_l^{\kappa_{\vartheta}}}}
\sum_{(\tau_1,\omega_1)\in A(\vartheta_1)} \dotsm
\sum_{(\tau_{\kappa_{\vartheta}},\omega_{\kappa_{\vartheta}})\in A(\vartheta_{\kappa_{\vartheta}})}
\left(\beta(\omega_0) \prod_{i=1}^{\kappa_{\vartheta}} \gamma(\tau_i,\vartheta_i,\omega_i)
\frac{\alpha(\tau_i)}{\alpha(\vartheta_i)} \right)
\cdot \left( \psi_{\phi_x}(\omega_0)\prod_{i=1}^{\kappa_{\vartheta}} \bar{\phi}_x(\tau_i,\omega_i)\right)(h) \\
& \cdot g_{l}^{(\kappa_0+\kappa_{\vartheta})}(x_0)
\left(F(\delta_1)(x_0),\dotsc,F(\delta_{\kappa_0})(x_0),F(\tau_1)(x_0),\dotsc,F(\tau_{\kappa_{\vartheta}})(x_0) \right),
\end{split}
\end{equation}
where each $\omega_0=[\delta_1,\dotsc,\delta_{\kappa_0}]_{g_l^{\kappa_{\vartheta}}}$.
The last term can be recognized as the elementary differential
$F(\tau)(x_0)$ for $\tau=[\delta_1,\dotsc,\delta_{\kappa_0},\tau_1,\dotsc,\tau_{\kappa_{\vartheta}}]_l$.
Choosing
\[ \bar{\phi}_x(\tau,\omega) = \left(\prod_{i=1}^{\kappa_0}\phi_x(\delta_i) \right) \prod_{i=1}^{\kappa_{\vartheta}} \bar{\phi}_x(\tau_i,\omega_i) \]
we obtain by induction
\[  \bar{\phi}_x(\tau,\omega) =
\prod_{\delta\in\omega}\phi_x(\delta). \]
To complete the argument we have to find the expression for $\gamma(\tau,\vartheta,\omega)$ for this $(\tau,\vartheta,\omega)$ triple. For this purpose,
write $\vartheta=[\bar{\vartheta}_1^{r_1},\dotsc,\bar{\vartheta}_{q}^{r_q}]_l$, in the sense that $\vartheta$ is composed by $q$ different trees, each appearing $r_i$ times. Further, let $\tau_{ik}\in\{\tau_1,\dots,\tau_\kappa\}$ be the distinct trees with multiplicity $r_{ik}$, $k=1,\dots,p_i$, of which the $\bar{\vartheta}_i$ are subtrees, $\sum_{k=1}^{p_i}r_{ik}=r_i$.
Finally, let $\bar\delta_i\in\omega$ be the distinct trees with multiplicity $s_i$, $i=1,\dots,p_0$, of the remainder multiset which are directly connected to the root of $\tau$, $\sum_{k=1}^{p_0} s_k=\kappa_{0}$.
Then, $\tau$ can be written as
\begin{equation}
\label{eq:alltrees}
\tau=[\bar\delta_1^{s_1},\dotsc, \bar\delta_{p_0}^{s_{p_0}},
   \tau_{11}^{r_{11}},\dotsc,\tau_{1p_1}^{r_{1p_1}} ,\dotsc,
   \tau_{q1}^{r_{q1}},\dotsc,\tau_{qp_q}^{r_{qp_q}}]_l=
   [ \bar{\tau}_1^{R_1},\dotsc,\bar{\tau}_{Q}^{R_Q}]_l,
\end{equation}
where the rightmost expression above indicates that $\tau$ is composed by $Q$ different trees each appearing $R_i$ times.
Now the terms corresponding to each of the subtrees $[\tau_{i1}^{r_{i1}} ,\dotsc,\tau_{ip_i}^{r_{ip_i}}]_l$ appear exactly $r_i!/(r_{i1}!\dotsm r_{ip_i}!)$ times in the sum \eqref{eq:sumtheta}.
Using $\beta(\omega_0)$ from Lemma \ref{lem:f_y} the combinatorial term in \eqref{eq:comp_tmp} becomes
\[
\gamma(\tau,\vartheta,\omega)\frac{\alpha(\tau)}{\alpha(\vartheta)}
= \left(\frac{1}{s_1!\dotsm s_{p_0}!}\prod_{i=1}^{\kappa_0} \alpha(\delta_i) \right)
\left( \prod_{i=1}^q \frac{r_i!}{r_{i1}!\dotsm r_{ip_i}!} \right)
\left( \prod_{i=1}^{\kappa_{\vartheta}} \gamma(\tau_i,\vartheta_i,\omega_i) \frac{\alpha(\tau_i)}{\alpha(\vartheta_i)} \right).
\]
From the  definition of $\alpha$ for $\tau$ and $\vartheta$ we obtain
\begin{equation}\label{eq:gamma}
\gamma(\tau,\vartheta,\omega)=\frac{R_1!\dotsm R_Q!}{s_1!\dotsm s_{p_0}! r_{11}!\dotsm r_{qp_q}!} \prod_{i=1}^{\kappa_{\vartheta}} \gamma(\tau_i,\vartheta_i,\omega_i).
\end{equation}
Let us illustrate the use of this formula by an example:
\begin{example}
Let $\tau=\dtree{\tnr{j}}{\tnl{j} \tnr{j}}$. Then for
\begin{align}
\vartheta &=\bullet_j, \text{ we have } \omega=\{\bullet_j,\bullet_j\} \text{ and }
\gamma(\tau,\vartheta,\omega)= 1,  \nonumber \\
 \intertext{and for}
\vartheta &= \dtree{\tnr{j}}{\tnr{j}}\,, \text{ we have }  \omega=\{\bullet_j\}
\text{ and } \gamma(\tau,\vartheta,\omega)=2 \label{eq:gamma2bush}
\,.
\end{align}
Next consider the tree
\[ \tau=\dtree{\tnr{j}}{\tnl{j} \tnl{j} \tnr{j} \dtree{\tnr{j}}{\tnr{j} \tnr{j}}}. \]
One possible partition of this tree is
\begin{equation}
\label{eq:expart1}
\vartheta=\dtree{\tnr{j}}{\tnl{j} \tnr{j}}, \quad
\omega = \{\bullet_j, \bullet_j, \bullet_j, \bullet_j\}.
\end{equation}
Then \eqref{eq:alltrees} becomes
\[ \tau = \left[ \bullet_j^2,\bullet_j,\dtree{\tnr{j}}{\tnl{j} \tnr{j}} \right]_j = \left[ \bullet_j^3,\dtree{\tnr{j}}{\tnl{j} \tnr{j}} \right]_j,\]
and we  get $\kappa_0=2$, $p_0=1$, $s_1=2$, $\kappa_{\vartheta}=2$, $r_{11}=r_{12}=1$, $Q=2$ and $R_1=3$, $R_2=1$, so for the partition \eqref{eq:expart1} of $\tau$ we get
\[ \gamma(\tau,\vartheta,\omega) = 3. \]
For the partition
\begin{equation}
\label{eq:expart2}
\vartheta=\dtree{\tnr{j}}{\dtree{\tnr{j}}{\tnr{j}}}\;,\quad
   \omega = \{\bullet_j,\bullet_j,\bullet_j,\bullet_j\},
\end{equation}
\eqref{eq:alltrees} becomes
\[ \tau =\left[ \bullet_j^3,\dtree{\tnr{j}}{\tnl{j} \tnr{j}} \right]_j
= \left[ \bullet_j^3,\dtree{\tnr{j}}{\tnl{j} \tnr{j}} \right]_j, \]
giving $\kappa_0=3$, $p_0=1$ so $s_1=3$. Further $\kappa_{\vartheta}=1$, and $Q$, $R_1$ and $R_2$ are as above. For the subtree $[\bullet_j,\bullet_j]_j$ we have to use the partition \eqref{eq:gamma2bush}, thus for the partition \eqref{eq:expart2} we get
\[ \gamma(\tau,\vartheta,\omega) = 2.\]
\end{example}
We can now state the main result of this section:
\begin{theo}[Composition of B--series] \thlabel{thm:comp}
Let $\phi_x,\phi_y\;:\;T \rightarrow \Xi$ and $\phi_x(\emptyset)\equiv 1$. Then the B--series $B(\phi_y,\cdot;h)$ evaluated at $B(\phi_x,x_0;h)$ is again a B--series,
\[ B(\phi_y,B(\phi_x,x_0;h);h)=B(\phi_x\mlt \phi_y,x_0;h),\]
where
\[ \left(\phi_x\mlt \phi_y \right)(\tau) = \sum_{(\vartheta,\omega)\in ST(\tau)} \gamma(\tau,\vartheta,\omega)\left( \phi_y(\vartheta)\prod_{\delta \in \omega} \phi_x(\delta)\right)
\]
with $\gamma(\emptyset,\emptyset,\{\emptyset\})=1$ and $\gamma(\tau, \vartheta,\omega)$ given by \eqref{eq:gamma} for $\tau\neq\emptyset$.
\end{theo}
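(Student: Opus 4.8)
The proof is best organized as a structural induction that establishes the expansion of a single elementary differential, followed by an interchange of summation. The starting point is Definition \ref{def:B--series} applied to the outer series, which gives \eqref{eq:bcom_tmp} at once and reduces the whole problem to expanding $F(\vartheta)(B_x)$, with $B_x=B(\phi_x,x_0;h)$, as a B--series in $x_0$. I claim this expansion always has the shape \eqref{eq:finB}: it is indexed by the pairs $(\tau,\omega)\in A(\vartheta)$, carries the coefficient $\gamma(\tau,\vartheta,\omega)\,\alpha(\tau)/\alpha(\vartheta)$, and has the weight $\bar{\phi}_x(\tau,\omega)=\prod_{\delta\in\omega}\phi_x(\delta)$. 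The hypothesis $\phi_x(\emptyset)\equiv1$ is exactly what is needed to apply \thref{lem:f_y} to the coefficient functions $g_l$ along the way.

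I would prove \eqref{eq:finB} by structural induction on $\vartheta$. The base case $\vartheta=\emptyset$ is immediate since $F(\emptyset)(B_x)=B_x$, whose index set is $A(\emptyset)=\{(\tau,\{\tau\}):\tau\in T\}$ with $\gamma\equiv1$ and $\bar{\phi}_x(\tau,\{\tau\})=\phi_x(\tau)$; the case $\vartheta=\bullet_l$ follows by applying \thref{lem:f_y} to $g_l(B_x)$, again reading off $\gamma\equiv1$ and a product of $\phi_x$ over the remainder multiset. For the inductive step, writing $\vartheta=[\vartheta_1,\dotsc,\vartheta_{\kappa_\vartheta}]_l$, I would apply \thref{lem:f_y} to the outer derivative $g_l^{(\kappa_\vartheta)}(B_x)$, substitute the induction hypothesis \eqref{eq:finB} into each argument $F(\vartheta_i)(B_x)$, and invoke the multilinearity of $g_l^{(\kappa_\vartheta)}$. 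This produces the nested sum \eqref{eq:sumtheta}, whose final factor is recognized as $F(\tau)(x_0)$ for the tree $\tau$ of \eqref{eq:alltrees} obtained by attaching all the $\delta_i$ and $\tau_i$ to a common $l$-root; the weight factorizes as claimed, and a short induction yields $\bar{\phi}_x(\tau,\omega)=\prod_{\delta\in\omega}\phi_x(\delta)$.

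The only nonroutine point, and the step I expect to be the main obstacle, is pinning down the combinatorial coefficient $\gamma$. Several ordered choices in \eqref{eq:sumtheta} collapse onto the same unordered triple $(\tau,\vartheta,\omega)$: the branches carrying the copies of a given $\bar{\vartheta}_i$ can be distributed among the distinct $\tau_{ik}$ in $r_i!/(r_{i1}!\dotsm r_{ip_i}!)$ ways, while the symmetry factor $\beta(\omega_0)$ from \thref{lem:f_y} accounts for the trees $\delta_i$ attached directly to the root. Combining these multiplicities with the ratio $\alpha(\tau)/\alpha(\vartheta)$ and simplifying by the definition of $\alpha$ on both $\tau$ and $\vartheta$ must reduce to the clean recursion \eqref{eq:gamma}; verifying that the repetition factors and the automorphism weights conspire to give exactly $R_1!\dotsm R_Q!/(s_1!\dotsm s_{p_0}!\,r_{11}!\dotsm r_{qp_q}!)$ requires carefully separating the contributions of subtrees directly attached to the root from those buried inside the $\vartheta_i$, and is where the bookkeeping is heaviest.

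Finally, I would substitute the established expansion \eqref{eq:finB} back into \eqref{eq:bcom_tmp}. The outer factors $\alpha(\vartheta)$ and $\phi_y(\vartheta)$ multiply the inner $\alpha(\tau)/\alpha(\vartheta)$ and $\bar{\phi}_x(\tau,\omega)$; the $\alpha(\vartheta)$ cancel, leaving a sum over $\{(\tau,\vartheta,\omega):\vartheta\in T,\,(\tau,\omega)\in A(\vartheta)\}$. By the index-switching identity \eqref{eq:switchsum} this set equals $\{(\tau,\vartheta,\omega):\tau\in T,\,(\vartheta,\omega)\in ST(\tau)\}$, so I may collect terms by $\tau$ and read off the coefficient of $\alpha(\tau)\,F(\tau)(x_0)$ as $\sum_{(\vartheta,\omega)\in ST(\tau)}\gamma(\tau,\vartheta,\omega)\,\phi_y(\vartheta)\prod_{\delta\in\omega}\phi_x(\delta)$. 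This is precisely $(\phi_x\mlt\phi_y)(\tau)$, which establishes the stated formula; the normalization $\gamma(\emptyset,\emptyset,\{\emptyset\})=1$ is just the $\tau=\emptyset$ instance of the $\vartheta=\emptyset$ base case.
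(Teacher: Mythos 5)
Your proposal is correct and follows essentially the same route as the paper's own argument: the same reduction to the expansion \eqref{eq:finB} of $F(\vartheta)(B_x)$ over $A(\vartheta)$, the same structural induction on $\vartheta$ using \thref{lem:f_y} and multilinearity to reach \eqref{eq:sumtheta}, the same combinatorial bookkeeping (repetition factors $r_i!/(r_{i1}!\dotsm r_{ip_i}!)$, the symmetry factor $\beta(\omega_0)$, and the $\alpha(\tau)/\alpha(\vartheta)$ ratio) leading to \eqref{eq:gamma}, and the same final interchange of summation via \eqref{eq:switchsum}. Nothing essential is missing; you have correctly identified the coefficient computation as the only delicate step, and your description of how it resolves matches the paper.
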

\begin{remark}
The composition rule $\phi_x\mlt \phi_y$ can also be expressed in terms of ordered trees, as in the deterministic case, see e.\,g.\ \cite{hairer06gni}.
The combinatorial term $\gamma(\tau,\vartheta,\omega)$ is then the number of equal terms appearing in the sum over ordered subtrees.
\end{remark}
\begin{example} For example
\begin{align*}
\left(\phi_x\mlt \phi_y\right)\left(\dtree{\tnr{j}}{\tnl{l}\tnr{l}\tnr{j}}\right)
&= \phi_y(\emptyset)\phi_x \left(\dtree{\tnr{j}}{\tnl{l}\tnr{l}\tnr{j}}\right) + \phi_y(\bullet_j)\phi_x(\bullet_l)^2\phi_x(\bullet_j) \\&
+ 2\phi_y\left(\dtree{\tnr{j}}{\tnr{l}}\right)\phi_x(\bullet_l)\phi_x(\bullet_j)
+ \phi_y\left(\dtree{\tnr{j}}{\tnr{j}}\right)\phi_x(\bullet_l)^2 \\&
+ 2\phi_y\left(\dtree{\tnr{j}}{\tnl{l}\tnr{j}}\right)\phi_x(\bullet_l)
+ \phi_y\left(\dtree{\tnr{j}}{\tnl{l}\tnr{l}}\right)\phi_x(\bullet_j)
+ \phi_y\left(\dtree{\tnr{j}}{\tnl{l}\tnr{l}\tnr{j}}\right),
\end{align*} and
\[
(\phi_x\mlt\phi_y)\left(\raisebox{-1.5ex}{\dtree{\tnr{j}}{\dtree{\tnr{k}}{\tnr{l}}}}\right)
=
\phi_y(\emptyset)\phi_x \left(\raisebox{-1.5ex}{\dtree{\tnr{j}}{\dtree{\tnr{k}}{\tnr{l}}}}\right)
+ \phi_y(\bullet_j)\phi_x\left(\dtree{\tnr{k}}{\tnr{l}}\right)
+ \phi_y\left(\dtree{\tnr{j}}{\tnr{k}}\right)\phi_x(\bullet_l)
+\phi_x \left(\raisebox{-1.5ex}{\dtree{\tnr{j}}{\dtree{\tnr{k}}{\tnr{l}}}}\right).
\]
\end{example}

\section{Application to implicit Taylor methods}
\label{sec:taylor}
We will now see how the composition rule will give the order conditions for implicit Taylor methods. One step of this method is given by
\begin{equation}
\label{eq:one_step_taylor}
Y_1 = B(\Phi_{ex},x_0;h) + B(\Phi_{im},Y_1;h)
\end{equation}
with $\Phi_{ex}(\emptyset)\equiv 1$ and $\Phi_{im}(\emptyset)\equiv 0$. If the solution $Y_1$ is expanded into a B--series, $B(\Phi,x_0;h)$, then $\Phi(\emptyset)=\Phi_{ex}(\emptyset)+\Phi_{im}(\emptyset)\equiv1$ implies that \eqref{eq:one_step_taylor} can be written as
\begin{align}
B(\Phi,x_0;h) &=B(\Phi_{ex},x_0;h)+B(\Phi_{im},B(\Phi,x_0;h);h) \nonumber \\
              &=B(\Phi_{ex},x_0;h)+B(\Phi\mlt\Phi_{im},x_0;h).
              \label{eq:b_taylor}
\end{align}
Comparing term by term gives us the following result:
\begin{theo} \thlabel{thm:Bnum}
The numerical solution $Y_1$ given by \eqref{eq:one_step_taylor} can be written as a B--series
\[ Y_1 = B(\Phi,x_0;h) \]
with $\Phi$ recursively defined by
\begin{subequations}\label{eq:phi_taylor}
\begin{align}
\Phi(\emptyset) & \equiv 1, \\
\Phi(\tau) &= \Phi_{ex}(\tau) + (\Phi\mlt\Phi_{im})(\tau). \label{eq:phi_taylor_b}
\end{align}
\end{subequations}
\end{theo}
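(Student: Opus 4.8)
The plan is to posit that the numerical solution has a B--series expansion $Y_1=B(\Phi,x_0;h)$ with an as yet undetermined coefficient map $\Phi$, substitute this into the defining relation \eqref{eq:one_step_taylor}, and extract a recursion for $\Phi$ by matching the coefficient of each elementary differential. Most of the algebra is already carried out in \eqref{eq:b_taylor}: provided $\Phi(\emptyset)\equiv1$, the composition rule of \thref{thm:comp} rewrites the nested series $B(\Phi_{im},B(\Phi,x_0;h);h)$ as the single B--series $B(\Phi\mlt\Phi_{im},x_0;h)$, so that \eqref{eq:one_step_taylor} becomes $B(\Phi,x_0;h)=B(\Phi_{ex},x_0;h)+B(\Phi\mlt\Phi_{im},x_0;h)$. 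Since B--series are linear in their coefficient map, comparing the coefficient of $\alpha(\tau)F(\tau)(x_0)$ on the two sides yields exactly the asserted recursion \eqref{eq:phi_taylor_b}.

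First I would secure the hypothesis $\Phi(\emptyset)\equiv1$ that legitimises the use of \thref{thm:comp}. Reading off the coefficient of $F(\emptyset)(x_0)=x_0$ in \eqref{eq:one_step_taylor}, the explicit part contributes $\Phi_{ex}(\emptyset)\equiv1$, whereas in $B(\Phi_{im},Y_1;h)$ the empty tree carries the factor $\Phi_{im}(\emptyset)\equiv0$ and every nonempty $\vartheta$ produces differentials $F(\vartheta)(Y_1)$ whose associated trees are all nonempty and hence never contribute to the $x_0$ term. This forces $\Phi(\emptyset)\equiv1$, which also serves as the base case of the recursion.

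The hard part will be to argue that \eqref{eq:phi_taylor_b} is a genuine recursion determining $\Phi$ uniquely, rather than a mere implicit relation, because $(\Phi\mlt\Phi_{im})(\tau)$ is itself built from values of $\Phi$. Writing out the composition formula of \thref{thm:comp},
\[
(\Phi\mlt\Phi_{im})(\tau)=\sum_{(\vartheta,\omega)\in ST(\tau)}\gamma(\tau,\vartheta,\omega)\,\Phi_{im}(\vartheta)\prod_{\delta\in\omega}\Phi(\delta),
\]
the only term that could reference $\Phi(\tau)$ itself is the one indexed by $(\vartheta,\omega)=(\emptyset,\{\tau\})$, where $\prod_{\delta\in\omega}\Phi(\delta)=\Phi(\tau)$. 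This is exactly where $\Phi_{im}(\emptyset)\equiv0$ is decisive: it annihilates that summand. For every surviving pair $(\vartheta,\omega)\in ST(\tau)$ the subtree $\vartheta$ is nonempty and therefore contains the root of $\tau$, so by the recursive construction of $ST(\tau)$ each remaining $\delta\in\omega$ has strictly fewer nodes than $\tau$. Hence $(\Phi\mlt\Phi_{im})(\tau)$, and with it $\Phi(\tau)$, depends only on the values of $\Phi$ at trees strictly smaller than $\tau$.

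This settles the theorem by induction on the number of nodes: beginning from $\Phi(\emptyset)\equiv1$, relation \eqref{eq:phi_taylor_b} defines $\Phi(\tau)$ unambiguously for every $\tau\in T$. Conversely, taking this $\Phi$ and reading \eqref{eq:b_taylor} from right to left shows that $B(\Phi,x_0;h)$ indeed satisfies \eqref{eq:one_step_taylor}, so the constructed B--series is the claimed representation of $Y_1$.
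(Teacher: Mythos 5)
Your proposal is correct and follows essentially the same route as the paper: expand $Y_1$ as $B(\Phi,x_0;h)$, use $\Phi_{im}(\emptyset)\equiv 0$ and $\Phi_{ex}(\emptyset)\equiv 1$ to get $\Phi(\emptyset)\equiv 1$, invoke \thref{thm:comp} to rewrite the nested series, and compare coefficients. Your additional observation that $\Phi_{im}(\emptyset)\equiv 0$ kills the $(\emptyset,\{\tau\})$ summand so that \eqref{eq:phi_taylor_b} is a genuine well-founded recursion on the number of nodes is a point the paper leaves implicit (it surfaces only in the subsequent splitting $\Phi(\tau)=\Phi_{ex}(\tau)+\Phi_{im}(\tau)+\mathcal{R}(\tau)$), and making it explicit is a worthwhile refinement rather than a deviation.
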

Since $\Phi_{im}(\emptyset)\equiv 0$ the expression \eqref{eq:phi_taylor_b} can be written as
\[ \Phi(\tau)=\Phi_{ex}(\tau)+\Phi_{im}(\tau) + \mathcal{R}(\tau) \]
with
\[ \mathcal{R}(\tau)= \sum_{\substack{(\vartheta,\omega)\in ST(\tau)\\\vartheta \not= \emptyset,\tau}} \gamma(\tau,\vartheta,\omega)\left( \Phi_{im}(\vartheta)\prod_{\delta \in \omega} \Phi(\delta)\right).
\]
In Table \ref{tab:ordcond} the $\mathcal{R}$ functions are listed for all trees with up to four nodes.
\begin{sidewaystable}
\caption{The correction terms $\mathcal{R}$ together with the weight functions $\varphi(\tau)$ of the correct solution for all trees with up to four nodes. } \label{tab:ordcond}
\[\begin{array}{ccc}
\tau & \mathcal{R}(\tau) & \varphi(\tau) \\ \hline
 \bullet_i & 0 & W_i \\[2ex]
 \dtree{\tnr{i}}{\tnr{j}} & \Phi_{im}(\bullet_i)\Phi(\bullet_j) & \int W_j\mltl dW_i \\[2ex]
 \dtree{\tnr{i}}{\tnl{j}\tnr{k}} &  \Phi_{im}(\bullet_i)\Phi(\bullet_j)\Phi(\bullet_k)
 + \Phi_{im}\left(\dtree{\tnr{i}}{\tnr{j}}\right)\Phi(\bullet_k)
 + \Phi_{im}\left(\dtree{\tnr{i}}{\tnr{k}}\right)\Phi(\bullet_j) &
 \int W_jW_k\mltl dW_i \\[2ex]
 \raisebox{-1.5ex}{\dtree{\tnr{i}}{\dtree{\tnr{j}}{\tnr{k}}}} &
 \Phi_{im}(\bullet_i)\Phi\left(\dtree{\tnr{j}}{\tnr{k}} \right)
 + \Phi_{im}\left( \dtree{\tnr{i}}{\tnr{j}}\right)\Phi(\bullet_k)
 & \int \int W_k \mltl dW_j \mltl dW_i  \\[3ex]
 \raisebox{-2.5ex}{ \dtree{\tnr{i}}{\tnl{j} \tnr{k} \tnr{l}}} &
 \Phi_{im}(\bullet_i) \Phi(\bullet_j)\Phi(\bullet_k)\Phi(\bullet_l)
 + \Phi_{im}\left(\dtree{\tnr{i}}{\tnr{j}}\right)\Phi(\bullet_k)\Phi(\bullet_l)
 + \Phi_{im}\left(\dtree{\tnr{i}}{\tnr{k}}\right)\Phi(\bullet_j)\Phi(\bullet_l)
 + \Phi_{im}\left(\dtree{\tnr{i}}{\tnr{l}}\right)\Phi(\bullet_j)\Phi(\bullet_k) & \raisebox{-2ex}{$\int W_j W_k W_l \mltl dW_i$}\\ &
  + \Phi_{im}\left(\dtree{\tnr{i}}{\tnl{j}\tnr{k}}\right)\Phi(\bullet_l)
  + \Phi_{im}\left(\dtree{\tnr{i}}{\tnl{j}\tnr{l}}\right)\Phi(\bullet_k)
  + \Phi_{im}\left(\dtree{\tnr{i}}{\tnl{k}\tnr{l}}\right)\Phi(\bullet_j)
  & \\[3ex]
  \raisebox{-1ex}{\dtree{\tnr{i}}{\tnl{j} \dtree{\tnr{k}}{\tnr{l}}}} &
  \Phi_{im}(\bullet_i)\Phi(\bullet_j)\Phi \left( \dtree{\tnr{k}}{\tnr{l}} \right)
  + \Phi_{im}\left(\dtree{\tnr{i}}{\tnr{j}}\right) \Phi\left( \dtree{\tnr{k}}{\tnr{l}} \right)
 + \Phi_{im}\left(\dtree{\tnr{i}}{\tnr{k}}\right)\Phi(\bullet_j)\Phi(\bullet_l)
+ \Phi_{im}\left(\dtree{\tnr{i}}{\tnl{j}\tnr{k}}\right) \Phi(\bullet_l)
 & \int \left(W_j \int W_l \mltl dW_k\right) \mltl dW_i \\[2ex]
 \raisebox{-1ex}{\dtree{\tnr{i}}{\dtree{\tnr{j}}{\tnl{k} \tnr{l}}}} &
 \Phi_{im}(\bullet_i)\Phi\left(\dtree{\tnr{j}}{\tnl{k} \tnr{l}} \right)
+ \Phi_{im} \left( \dtree{\tnr{i}}{\tnr{j}}\right)\Phi(\bullet_k)\Phi(\bullet_l)
+ \Phi_{im}\left(\raisebox{-1.5ex}{\dtree{\tnr{i}}{\dtree{\tnr{j}}{\tnr{k}}}} \right) \Phi(\bullet_l)
+ \Phi_{im}\left(\raisebox{-1.5ex}{\dtree{\tnr{i}}{\dtree{\tnr{j}}{\tnr{l}}}} \right) \Phi(\bullet_k) &
\int \int W_k W_l \mltl dW_j \mltl dW_i \\[3ex]
\raisebox{-3ex}{\dtree{\tnr{i}}{\dtree{\tnr{j}}{\dtree{\tnr{k}}{\tnr{l}}}}} &
\Phi_{im}(\bullet_i)\Phi\left(\raisebox{-1.5ex}{\dtree{\tnr{j}}{\dtree{\tnr{k}}{\tnr{l}}} }\right)
+ \Phi_{im} \left(\dtree{\tnr{i}}{\tnr{j}} \right) \Phi \left( \dtree{\tnr{k}}{\tnr{l}}\right)
+ \Phi_{im} \left( \raisebox{-1.5ex}{\dtree{\tnr{i}}{\dtree{\tnr{j}}{\tnr{k}}}} \right) \Phi(\bullet_{l}) &
\int\int\int W_l \mltl dW_k \mltl dW_j \mltl dW_i
 \end{array}
 \]
\label{default}
\end{sidewaystable}

To decide the weak order we will also need the B--series of the function
$f$, evaluated at the exact and the numerical solution.
From \thref{thm:Bex,thm:Bnum,lem:f_y} we obtain
\[
f(X(t_0+h))=\sum_{u\in U_f}\beta(u)\cdot\psi_\varphi(u)(h)\cdot G(u)(x_0),
\]
\[
f(Y_1)=\sum_{u\in U_f}\beta(u)\cdot\psi_\Phi(u)(h)\cdot G(u)(x_0),
\]
with \[\psi_\varphi([\emptyset]_f)\equiv1,\quad
\psi_\varphi(u=[\tau_1,\dotsc,\tau_\kappa]_f)(h) =
\prod\limits_{j=1}^\kappa \varphi(\tau_j)(h)
\]
and \[\psi_\Phi([\emptyset]_f)\equiv1,\quad
\psi_\Phi(u=[\tau_1,\dotsc,\tau_\kappa]_f)(h) =
\prod\limits_{j=1}^\kappa \Phi(\tau_j)(h).\]
With all the B--series in place, we can now present the order
conditions for the weak and strong convergence, for both the It\^{o}
and the Stratonovich case.
As usual we assume that method \eqref{eq:STM} is constructed such that $E\psi_{\Phi}(u)(h)=\mathcal{O}(h^{\rho(u)})$ $\forall u\in U_f$ and $\Phi(\tau)(h)=\mathcal{O}(h^{\rho(\tau)})$ $\forall\tau\in T$, respectively, where especially in the latter expression the $\mathcal{O}(\cdot)$-notation refers to the $L^2(\Omega)$-norm and $h\to0$. These conditions are fulfilled if for $\tau\in T$ and $k\in\N=\{0,1,\dots\}$ it holds that
$(\Phi_{ex}(\tau))^{2^k}=\mathcal{O}(h^{2^k\rho(\tau)})$ and
$(\Phi_{im}(\tau))^{2^k}=\mathcal{O}(h^{2^k\rho(\tau)})$.
Then we have weak consistency of order $p$ if and only if
\begin{equation}
    \label{eq:T_eq}
    \E\psi_\Phi(u)(h)=\E\psi_\varphi(u)(h)+\bO(h^{p+1})\quad\forall u\in U_f\text{ with }\rho(u)\leq p+\frac12
\end{equation}
(\eqref{eq:T_eq} slightly weakens conditions given in \cite{roessler06rta}), and
mean square global order $p$ if \cite{burrage04isr}
\begin{eqnarray}\label{eq:StrOrdCond1}
\Phi(\tau)(h)&=&\varphi(\tau)(h)+\bO(h^{p+\frac12})\quad\forall\tau\in T\text{ with }\rho(\tau)\leq p,\\
    \label{eq:StrOrdCond2}
\E\Phi(\tau)(h)&=&\E\varphi(\tau)(h)+\bO(h^{p+1})\quad\forall\tau\in T\text{ with }\rho(\tau)\leq p+\frac12
\end{eqnarray}
and all elementary differentials $F(\tau)$ fulfill a linear growth condition. Instead of the last requirement it is also enough to claim that there exists a constant $C$ such that $\|g_j'(y)\|\leq C\quad\forall y\in\real^m$, $j=0,\dots,M$, and all necessary partial derivatives exist \cite{burrage00oco}. 
\newcommand{\alp}[1]{c_{#1}}
\newcommand{\I}[1]{I_{(#1)}}
\newcommand{\g}{g_{1}}
\newcommand{\f}{g_{0}}
\section{Numerical example}\label{seq:num}
As an example, we want to construct a parameterized family of strong order 1.5 Taylor methods, applicable to It\^{o}-SDEs with one-dimensional noise. As ansatz, we choose
\begin{align*}
Y_{n+1}=&B(\Phi_{ex},Y_n;h)+\alp1\I1g_{1,n+1}+\alp2hg_{0,n+1}+(\alp3\I{1,1}+\alp4h)g_{1,n+1}'g_{1,n+1}\\
&+\alp5\frac{h^2}2g_{0,n+1}'g_{0,n+1}+\alp6\frac{h^2}4g_{0,n+1}''(g_{1,n+1},g_{1,n+1}),
\end{align*}
where $g_{l,n+1}=g_l(Y_{n+1})$, $\alp1,\alp2,\alp3,\alp4,\alp5,\alp6\in\real$, $\Phi_{ex}(\tau)\equiv0$ for $\rho(\tau)>2$, and $\Phi_{ex}(\tau)$ is required to be deterministic for $\rho(\tau)=2$.

Together with the order conditions \eqref{eq:StrOrdCond1} and \eqref{eq:StrOrdCond2}, these requirements are sufficient to uniquely determine the remaining unknowns $\Phi_{ex}(\tau)$ for $\rho(\tau)\leq2$, and we obtain finally the following family of methods, using the abbreviation $g_{l}=g_l(Y_{n})$:
\begin{align*}
Y_{n+1}=&Y_n+\alp1\I1g_{1,n+1}+\alp2hg_{0,n+1}+(\alp3\I{1,1}+\alp4h)g_{1,n+1}'g_{1,n+1}
+\alp5\frac{h^2}2g_{0,n+1}'g_{0,n+1}
\\&+\alp6\frac{h^2}4g_{0,n+1}''(g_{1,n+1},g_{1,n+1})
+(1-\alp1)\I1\g+h(1-\alp2)\f
\\&+\left((-\alp1-\alp4)h+(1-2\alp1-\alp3)\I{1,1}\right)\g'\g
\\&+\left((1-\alp1)\I{0,1}-\alp1\I{1,0}\right)\g'\f
+(-\alp2\I{0,1}+(1-\alp2)\I{1,0})\f'\g
\\&+\left(\frac12\I{0,1}-\left(\frac32\alp1+\alp3+\alp4\right)h\I1
+(1-3\alp1-3\alp3)\I{1,1,1}\right)\g''(\g,\g)
\\&-\left((\alp1+\alp3+\alp4)h\I1+(3\alp1+3\alp3-1)\I{1,1,1}\right)\g'\g'\g
+(1-2\alp2-\alp5)\frac{h^2}2\f'\f
\\&-(\alp1+\alp4)h^2\g''(\f,\g)-\alp1\frac{h^2}2\g'\f'\g
-(\frac12\alp1+\alp4)h^2\g'\g'\f
\\&+\frac14(1-2\alp2-\alp6)h^2\f''(\g,\g)
-\alp3\frac{h^2}2\g'\g'\g'\g
-\frac12(\frac12\alp1+\alp3+\alp4)h^2\g'\g''(\g,\g)
\\&-(\alp1+\frac32\alp3+\alp4)h^2\g''(\g'g,\g)
-\frac12(\alp1+\alp3+\alp4)h^2\g'''(\g,\g,\g).
\end{align*}
In the following, we analyze numerically the order of convergence of this stochastic Taylor methods for the special parameter choice $\alp1=\alp2=\alp3=\alp4=\alp5=\alp6=\frac12$ by applying it to two examples. In both cases, the solution is approximated with step sizes $2^{-9}, \ldots, 2^{-15}$ and the sample average of $M=2000$ independent simulated realizations of the absolute error is calculated in order to estimate the expectation.

Our first example is the non-linear SDE \cite{kloeden99nso}
\begin{equation} \label{eq:nonlinex}
    X(t) = \int_0^t\left( \tfrac{1}{2} X(s) + \sqrt{X(s)^2 + 1} \right) \,
    ds + \int_0^t\sqrt{X(s)^2 + 1} \, dW(s)
\end{equation}
on the time interval $I=[0,1]$ with the solution $X(t) = \sinh (t + W(t))$.

As second example, we apply our Taylor method to the non-linear vector-SDE
\begin{equation}\label{eq:nonlinex2d}
\begin{pmatrix}
X_1(t)\\X_2(t)
\end{pmatrix}= \int_0^t\begin{pmatrix}
    \frac12X_1(s)+\sqrt{X_1(s)^2+X_2(s)^2+1}\\
    \frac12X_1(s)+\sqrt{X_2(s)^2+1}
    \end{pmatrix}\,
    dt +
    \int_0^t \begin{pmatrix}
    \cos(X_1(s))\\
    \sin(X_2(s))\\
    \end{pmatrix}\, dW(s),
\end{equation}
again on the time interval $I=[0,1]$. As here we do not know the exact solution, to approximate it we use the semi-implicit order 1.5 method due to Kloeden and Platen \cite{kloeden99nso} with a step size ten times smaller than the actual step size.

For both examples, the results at time $t=1$ are presented in Figure \ref{fig:3}, where the order of convergence corresponds to the slope of the regression line. As expected, we observe also numerically strong order 1.5.

\newlength{\figwidth}
\setlength{\figwidth}{0.485 \textwidth}
\begin{figure}[tbp]
\subfigure[Error when applied to  \eqref{eq:nonlinex}]{\includegraphics[width=\figwidth]{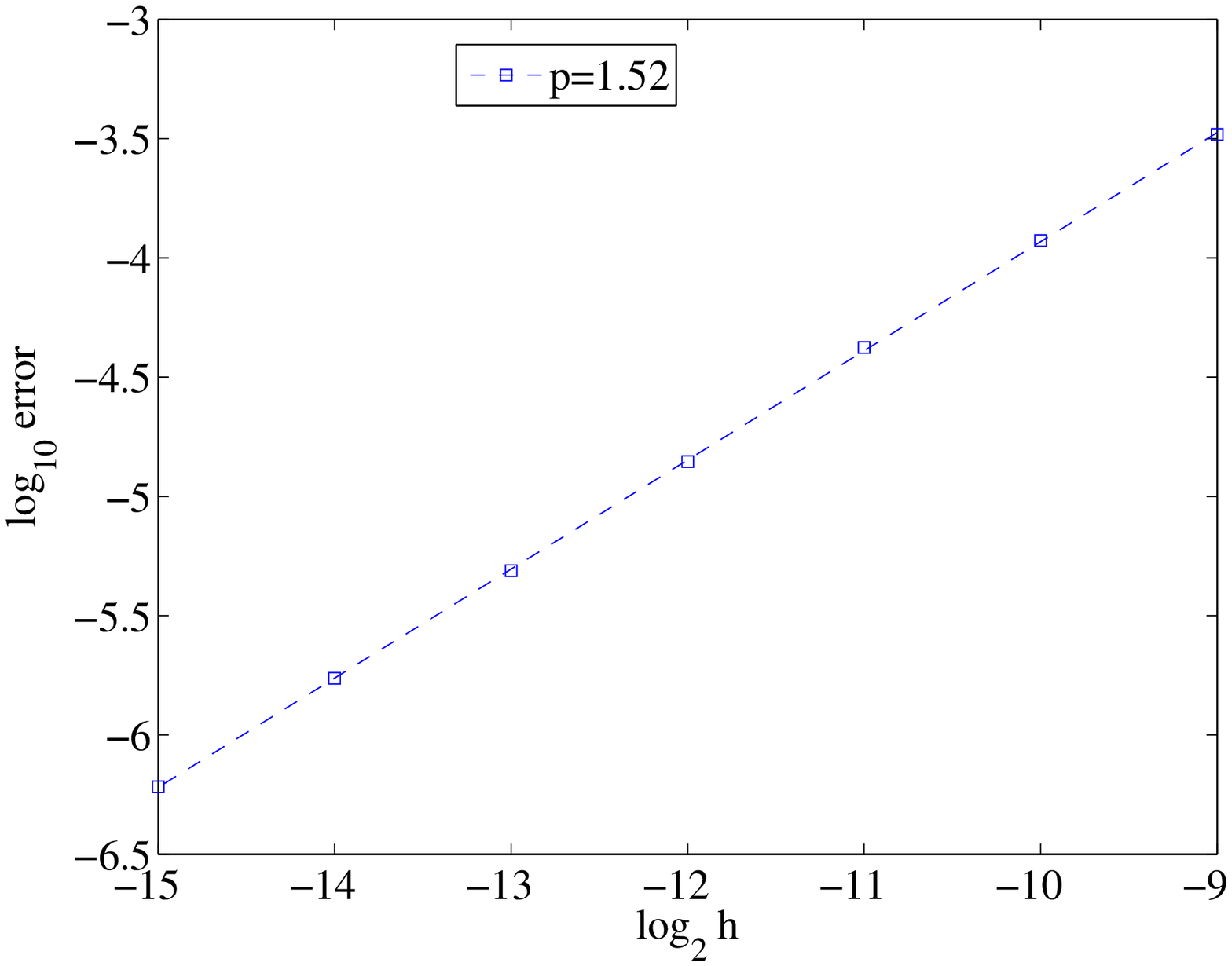}}
\subfigure[Error when applied to  \eqref{eq:nonlinex2d}]{\includegraphics[width=\figwidth]{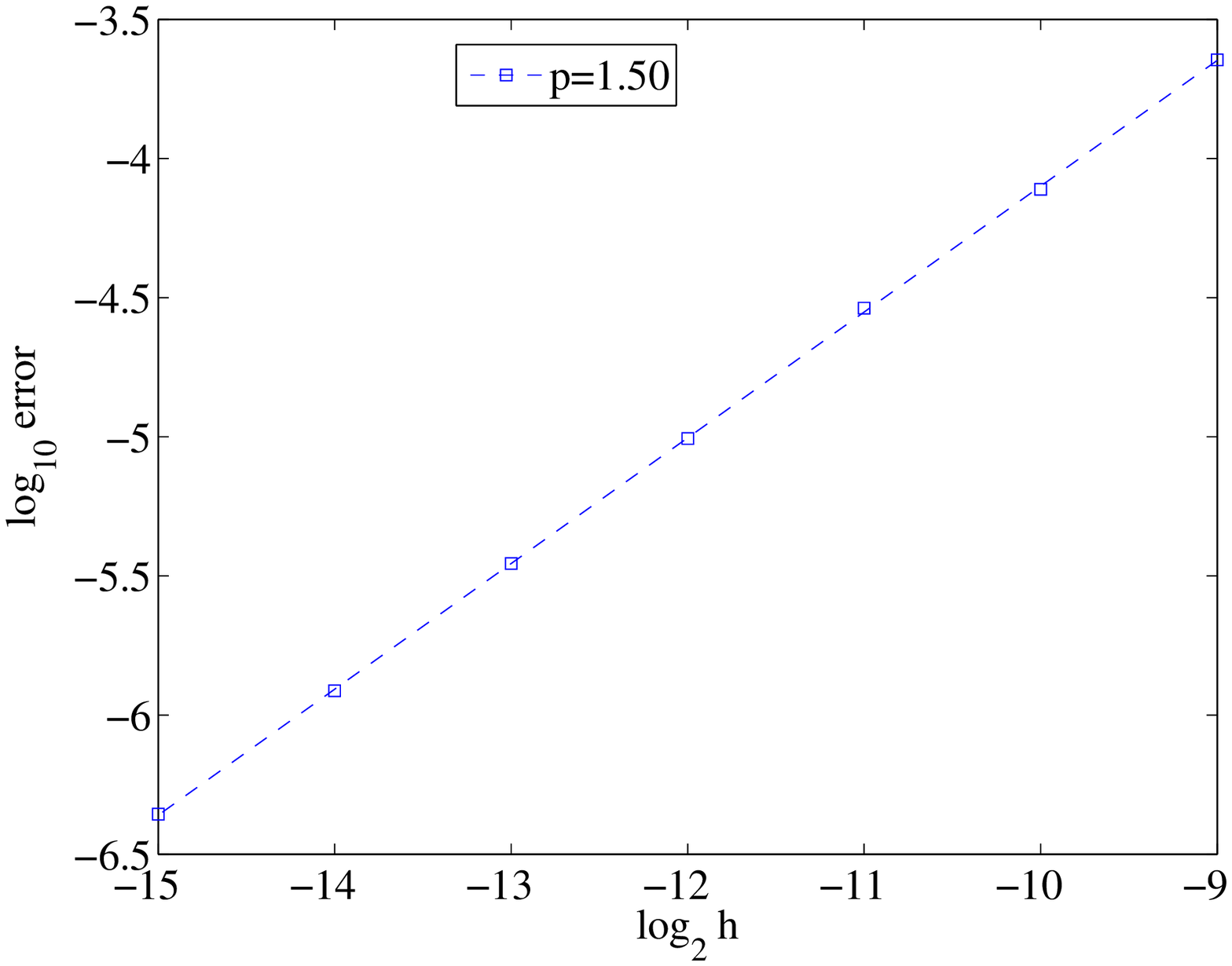}}
\caption{\label{fig:3}Error of example Taylor method vs.\ step size}
\end{figure}
\section{Conclusion}
In this article we have now given a new representation formula for (stochastic)
B--series evaluated in a B--series, which in contrast to the one known for the
deterministic case is not based on ordered trees. We have used this formula
for giving for the first time the order conditions of Taylor methods in
terms of rooted trees. We expect these to be useful for the construction of
new methods e.\,g.\ with good stability properties. Finally, we applied
these order conditions to derive in a simple manner a family of order 1.5
Taylor methods applicable to It\^{o} SDEs. In a forthcoming paper, the theory developed in this article is applied to analyze quite generally the order of iterated Taylor methods \cite{debrabantXXbao}. 

\def\cprime{$'$} \providecommand{\de}[2]{#2}

\end{document}